\documentclass[a4paper,10pt]{amsart}
\usepackage[utf8]{inputenc}
\usepackage{amsmath,amssymb,amsthm,mathtools,leftindex}
\usepackage{xcolor}
\usepackage{enumerate}
\usepackage{tikz}
\usepackage{tikz-cd}
\usepackage[all]{xy}
\usepackage{float}
\usepackage{soul}

\usepackage{listings}

\usepackage{hyperref}
\hypersetup{hidelinks}

\usepackage{stmaryrd}

\theoremstyle{plain}

\newtheorem{thm}{Theorem}[section]
\newtheorem{corollary}[thm]{Corollary}

\newtheorem{proposition}[thm]{Proposition}
\newtheorem{theorem}[thm]{Theorem}

\newtheorem*{conjecture*}{Conjecture}

\theoremstyle{definition}

\newtheorem{definition}[thm]{Definition}

\newtheorem{example}[thm]{Example}
\newtheorem{question}[thm]{Question}

\newcommand{\Hom}{\operatorname{Hom}}

\newcommand{\bbF}{\mathbb{F}}
\newcommand{\bbZ}{\mathbb{Z}}

\renewcommand{\epsilon}{\varepsilon}

\numberwithin{equation}{section}

\title[Primes represented by quadratic forms and the Weil abscissa]{Primes represented by quadratic forms and the Weil abscissa of abelian profinite groups}

\author[Jann]{Martin Jann}
\address{}
\email{}
\author[Kionke]{Steffen Kionke}
\address{FernUniversit\"at in Hagen, Fakult\"at f\"ur Mathematik und Informatik, 58084 Hagen, Germany}
\email{steffen.kionke@fernuni-hagen.de}

\subjclass[2010]{Primary 20P05;
Secondary 20E18, 11M41}

\date{\today}

\begin{document}

\begin{abstract}
Here we show that the Weil abscissa of 
the procyclic groups  $\prod_{p \in S} \bbZ_p$ equals $2$ for three sets $S$:  (i) the set of primes $p \equiv 1 \bmod 3$, (ii) the set of primes $p \equiv 1 \bmod 4$ and (iii) the set of primes $p \equiv 1,3 \bmod 8$. Our argument is based on the observation that integers all of whose prime factors lie in $S$ can be represented by a suitable binary quadratic form, which allows us to use a theorem of Iwaniec to exhibit a minorant for the Weil representation zeta function.
\end{abstract}

\maketitle

\section{Introduction}

The \emph{Weil abscissa} $\alpha(G)$ of a profinite group $G$ is an intricate invariant that quantifies how the number of absolutely irreducible representations of $G$ over finite fields $\mathbb{F}_{p^j}$ increases with the size of the fields. It is the abscissa of convergence of the Weil representation zeta function defined in \cite{corob2024weil}. 
For a finitely generated profinite \emph{abelian} group $G$, the \textit{Weil representation zeta function} admits the following simple definition
	\begin{align*}
		\zeta_G^W(s)=\exp\bigg(\sum_{p\in\mathcal{P}}\sum_{j=1}^{\infty} |\text{Hom}(G,\mathbb{F}_{p^j}^\times)|\frac{p^{-sj}}{j}\bigg),
	\end{align*}
 for $s \in \mathbb{C}$; here $\mathcal{P}$ denotes the set of all prime numbers and $|\text{Hom}(G,\mathbb{F}_{p^j}^\times)|$ the number of continuous homomorphisms from $G$ to $\mathbb{F}_{p^j}^\times$. However, even for abelian groups the Weil abscissa is difficult to compute, since its exact value depends in a subtle way on the distribution of primes in arithmetic progressions; see \cite{corob2024weil,kionke2024}. The purpose of this short note is to calculate the Weil abscissa of three procyclic groups building on ideas of the master thesis of the first author (written at FernUniversit\"at in Hagen in 2025). 

\begin{thm}\label{thm:main}
The procyclic groups
\[
    \prod_{p \equiv 1 \bmod 3} \bbZ_p, \quad \prod_{p \equiv 1 \bmod 4} \bbZ_p, \quad \prod_{p \in \{1,3\} \bmod 8} \bbZ_p
\]
have Weil abscissa $2$.
\end{thm}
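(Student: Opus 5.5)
We will show $\alpha(G)\le 2$ and $\alpha(G)\ge 2$ for $G=\prod_{p\in S}\bbZ_p$, where $S$ is one of the three sets. First we compute the coefficients. Since $\mathbb{F}_q^\times$ is cyclic of order $q-1$ and $G$ is procyclic with pro-$S$ completion, a continuous homomorphism $G\to\mathbb{F}_q^\times$ is the same as a choice of an element of the $S$-primary part of $\mathbb{F}_q^\times$. Hence $|\Hom(G,\mathbb{F}_{p^j}^\times)|=(p^j-1)_S$, the largest divisor of $p^j-1$ all of whose prime factors lie in $S$.

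\emph{Upper bound.} We have $(p^j-1)_S\le p^j$, so the exponent of $\zeta^W_G$ is dominated by $\sum_p\sum_j p^{j(1-s)}/j$, which converges for real $s>2$. Hence $\alpha(G)\le 2$.

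\emph{Lower bound.} It suffices to keep only the terms with $j=1$ and to show that $\sum_p (p-1)_S\,p^{-s}$ diverges for every real $s<2$. The idea is to produce many primes $p$ for which $p-1$ is, up to a small factor, an $S$-number. We use the classical fact that primitive representations control prime divisors. If $\gcd(x,y)=1$, then every odd prime divisor of $x^2+y^2$ is $\equiv 1\bmod 4$. Similarly, every prime divisor $\neq 3$ of $x^2+xy+y^2$ is $\equiv 1 \bmod 3$, and every odd prime divisor of $x^2+2y^2$ is $\equiv 1,3\bmod 8$. Accordingly we consider the quadratic polynomials
\[
P(x,y)=2Q(x,y)+1,\qquad Q\in\{x^2+y^2,\; x^2+xy+y^2,\; x^2+2y^2\},
\]
with parity restrictions on $x,y$ (for instance $x$ odd) so that $Q(x,y)$ is odd. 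Iwaniec's theorem on primes represented by quadratic polynomials in two variables, applied after a linear substitution to encode the congruence conditions, gives
\[
\#\{p\le N : p=P(x,y)\}\gg N/(\log N)^{3/2}.
\]

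\emph{Main obstacle: the gcd condition.} Iwaniec's theorem does not require $\gcd(x,y)=1$. We would handle this by splitting according to $d=\gcd(x,y)$. The number of $p\le N$ of the form $p=2d^2Q(x',y')+1$ with $d>D$ is at most $\sum_{d>D}O(N/d^2)=O(N/D)$. Choosing $D=(\log N)^2$, this is negligible against $N/(\log N)^{3/2}$. Every remaining prime has $d\le D$, and then
\[
(p-1)_S\ \ge\ \frac{p-1}{2\cdot 3\cdot d^2}\ \gg\ \frac{p}{(\log N)^4},
\]
where the factor $3$ accounts for a single factor $3$ in the case $p\equiv1\bmod3$, which occurs at most to the first power in a primitive value of $x^2+xy+y^2$.

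\emph{Conclusion.} On the dyadic interval $[N,2N]$ we therefore get at least $\gg N/(\log N)^{3/2}$ primes with $(p-1)_S\,p^{-s}\gg N^{1-s}(\log N)^{-4}$. The dyadic block thus contributes $\gg N^{2-s}(\log N)^{-6}$, which is unbounded for $s<2$. Hence the series diverges, $\alpha(G)\ge2$, and combined with the upper bound $\alpha(G)=2$. The step that needs the most care is verifying that Iwaniec's hypotheses hold for each polynomial $P$ together with the imposed parity and congruence conditions.
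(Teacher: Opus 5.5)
Your proof is correct in substance and follows the paper's strategy. You keep only the $j=1$ terms, use that primitive values of $x^2+y^2$, $x^2+2y^2$ and $x^2+xy+y^2$ (the paper uses $x^2+3y^2$ for the last case) have all prime factors in $S$ apart from a bounded power of $2$ and $3$, and combine this with Iwaniec's lower bound $\gg N/(\log N)^{3/2}$. Your upper bound via $(p^j-1)_S\le p^j$ is just the direct form of the paper's appeal to $\alpha(\widehat{\bbZ})=2$.

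The real difference is how the coprimality condition is handled. The paper quotes Iwaniec's theorem in a version that already counts primes $p$ with $p-1$ primitively represented by the genus, which is a single class here. It therefore gets $(p-1)_S\ge (p-1)/c$ with an absolute constant. You instead start from a count of arbitrary representations and discard the primes all of whose representations have $\gcd(x,y)>(\log N)^2$. This costs $O(N/(\log N)^2)$ primes and a harmless factor $(\log N)^{-4}$ per term. It is valid, and it makes your argument independent of which form of Iwaniec's theorem one has at hand, since the primitive version implies the one you use.

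Two smaller points. First, the parity restrictions are unnecessary. Without them $Q(x',y')$ has at most one factor $2$, so $(p-1)_S\ge (p-1)/(6d^2)$ still holds. Dropping them avoids the inhomogeneous linear substitution, which would otherwise force you to use the version of Iwaniec's theorem for general quadratic polynomials. Then $2Q+1$ (or simply $Q+1$, as in the paper) is directly of the form $B\varphi+A$.

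Second, and more importantly, the dyadic step does not follow as written. A lower bound for the number of good primes up to $N$ gives no lower bound for the number in $[N,2N]$ without a matching upper bound. Instead, sum over all good $p\le N$ and use $p^{1-s}\ge N^{1-s}$ for $1<s<2$, exactly as the paper does. This gives partial sums $\gg N^{2-s}(\log N)^{-11/2}\to\infty$.
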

Let $S$ be a non-empty subset of $\mathcal{P}$ and let $H_S=\prod_{p\in S} \mathbb{Z}_p$; here $\bbZ_p$ denotes the additive group of $p$-adic integers. Then $H_S$ is a procyclic group; this means, it is a profinite group that contains a dense cyclic subgroup.
Conjecture A of \cite{kionke2024} predicts three equalities, one of which can be stated as follows
\begin{conjecture*}
    The shifted partial Riemann zeta function $\zeta_S(s-1):= \prod_{p \in S} \frac{1}{1-p^{-s+1}}$ has the same abscissa of convergence as $\zeta^W_{H_S}(s)$.
\end{conjecture*}
If $S = \{p \in \mathcal{P} \mid p \equiv a \bmod d\}$ is the set of primes in an arithmetic progression with $\gcd(a,d)=1$, then $\zeta_S(s)$ has abscissa $1$ - this can be seen for instance using the Siegel–Walfisz theorem \cite{walfisz1936} - and hence our theorem confirms the conjecture for these three groups.

\medskip

Currently the conjecture is known only for very big or very small sets $S$.
First, consider sets $S \subset\mathcal{P}$ for which $\zeta_{\overline{S}}$ converges for a real number $s<1$, where $\overline{S}=\mathcal{P}\setminus S$. These sets are called \textit{thick}. If $S$ is a thick set of primes, then the second author proved $\alpha(H_S)=2$; see  \cite[Theorem 1.3]{kionke2024}. Second, the conjecture is known to hold if $\zeta_S(s)$ has abscissa $0$ (\cite[Lemma 2.1]{kionke2024}). For certain exponentially growing sequences $S$ of prime numbers this was already observed in \cite[Remark 5.4]{corob2024weil}.
Therefore, the question of the value of the Weil abscissa is of special interest in "in-between" cases.

\medskip

In this paper we analyze cases in which $S$ is the set of prime numbers that are part of one or more arithmetic progressions. To prove our main result, we will exploit the fact that primes in some arithmetic progressions are representable by certain binary quadratic forms and apply a theorem of Iwaniec \cite{Iwaniec1972}. The methods can be extended to prove general results about certain unions of arithmetic progressions. However, the method is limited and we emphasize it does not apply to general arithmetic progressions. In particular, the following question remains open.

\begin{question}
Do the procyclic groups $\prod_{p \equiv 2 \bmod 3} \bbZ_p$ and $\prod_{p \equiv 3 \bmod 4} \bbZ_p$ have Weil abscissa $2$?
\end{question}

\section{Quadratic forms and primes}
Let $S$ be a set of primes. Given a natural number $n$, we write $n_S$ to denote the largest divisor of $n$ all of whose prime factors lie in $S$.
We observe that
$|\text{Hom}(H_S,\mathbb{F}_{p^j}^\times)|=(p^j-1)_S$ for all $p\in \mathcal{P}$. Here we aim to describe sets of primes $p$ such that $(p-1)_S$ is close to $p$ for three specific sets $S_1$, $S_2$ and $S_3$.

\medskip

It is well-known that primes in arithmetic progressions can sometimes be detected using representability by a quadratic form.
\begin{definition}[cf.~\cite{Cox1997}]
    Let $a,b,c \in \mathbb{Z}$ be constants. Then $f:\mathbb{Z}\times\mathbb{Z}\to \mathbb{Z}$ with $f(x,y)= ax^2+bxy+cy^2$ is called a \emph{binary quadratic form} and $\Delta_f=b^2-4ac$ its \emph{discriminant}. 
    A number $n \in \mathbb{N}$ is said to be \emph{representable by $f$} if there are $x,y\in \mathbb{Z}$ with $f(x,y)=n$.  In this case we say that $f$ \emph{represents} $n$.
\end{definition}

\begin{example}
Let $n$ be a positive integer. The quadratic form $f_n(x,y) = x^2+ny^2$ has the discriminant $-4n$.
\end{example}

Representability of integers by quadratic forms is a classical topic in number theory. The sum of two squares problem is a famous example: which primes are sums of two squares? It was already known to Fermat that exactly $2$ and all primes in $S_1=\{p\in \mathcal{P} \mid p \equiv 1 \bmod 4\}$ are the sum of two squares. We will need the following refined result on representations of integers as a sum of two \emph{coprime} squares:
\begin{proposition}[{see \cite[Corollary 3.2.2]{Fine2007}}]\label{prop:1mod4}
    Let $n \in \mathbb{N}$.  Then $n=x^2+y^2$ for coprime integers $x,y$ if and only if the prime decomposition of $n$ is $n=2^\delta \prod_{i=1}^k p_i^{\alpha_i}$ where $\delta \in \{0,1\}$ and $p_1,\dots, p_k \in S_1$.
\end{proposition}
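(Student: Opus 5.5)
We prove the two implications separately, using Gaussian integers for the forward direction and a composition argument for the converse.

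For the forward direction, suppose $n = x^2+y^2$ with $\gcd(x,y)=1$, and let $p$ be an odd prime dividing $n$. Then $p \nmid y$, since otherwise $p \mid x$ as well, contradicting coprimality. Hence $y$ is invertible modulo $p$, and $(xy^{-1})^2 \equiv -1 \bmod p$. So $-1$ is a quadratic residue modulo $p$, which forces $p \equiv 1 \bmod 4$, that is, $p \in S_1$. For the prime $2$: if $4 \mid n$, then $x^2+y^2 \equiv 0 \bmod 4$. Squares are $0$ or $1$ modulo $4$, so $x$ and $y$ are both even, again a contradiction. Therefore $\delta \in \{0,1\}$.

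For the converse, work in $\mathbb{Z}[i]$, which is a unique factorization domain. Each $p \in S_1$ is a sum of two squares by Fermat, say $p = a^2+b^2$. Then $p = \pi\bar\pi$ with $\pi = a+bi$, and $\pi,\bar\pi$ are non-associate Gaussian primes. Also $2 = (1+i)(1-i)$, and $1-i = -i(1+i)$. Set
\[
\alpha = (1+i)^\delta \prod_{i'=1}^k \pi_{i'}^{\alpha_{i'}},
\]
taking exactly one of $\pi_{i'},\bar\pi_{i'}$ for each prime $p_{i'}$ and no conjugates. Then $N(\alpha) = n$, and we write $\alpha = x+yi$.

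It remains to show $\gcd(x,y)=1$. Suppose a rational prime $q$ divides both $x$ and $y$. Then $q \mid \alpha$ in $\mathbb{Z}[i]$, and we check each possible $q$:
\begin{itemize}
\item If $q \equiv 3 \bmod 4$, then $q$ is inert, so $q^2 \mid N(\alpha) = n$. This is impossible, since no prime $q \equiv 3 \bmod 4$ divides $n$.
\item If $q = 2$, then $2 = -i(1+i)^2$ would divide $\alpha$. This needs $(1+i)^2 \mid \alpha$, which contradicts $\delta \le 1$, because no $\pi_{i'}$ is associate to $1+i$.
\item If $q \equiv 1 \bmod 4$, then $q = p_{i'}$ for some $i'$, and $q \mid \alpha$ would force $\bar\pi_{i'} \mid \alpha$. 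This contradicts unique factorization, since $\alpha$ contains only $\pi_{i'}$ and $\bar\pi_{i'}$ is not associate to it.
\end{itemize}
Hence $\gcd(x,y)=1$, which completes the converse.

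The main point requiring care is that $\pi$ and $\bar\pi$ are non-associate. This holds because $\pi = a+bi$ has $a,b$ nonzero with $a \neq \pm b$ when $p$ is odd. Everything else is standard unique factorization in $\mathbb{Z}[i]$.
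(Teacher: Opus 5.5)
Your proof is correct. The paper gives no proof of its own here and simply cites \cite[Corollary 3.2.2]{Fine2007}, so the comparison is with the surrounding arguments.

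\textbf{Forward direction.} Yours is exactly the reasoning the paper sketches just after Corollary~\ref{cor:1} for general forms $x^2+ny^2$. There, coprimality makes $y$ a unit modulo every prime divisor, so $-n$ is a square there. The paper then carries this out for $n=2,3$, with an analogous parity check for the power of $2$. This necessary condition is also the only part of the proposition the paper ever uses. Corollary~\ref{cor:1} needs only that $p-1=x^2+y^2$ with $\gcd(x,y)=1$ forces $(p-1)_{S_1}=(p-1)/2^{\delta}\geq (p-1)/2$.

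\textbf{Converse.} Your Gaussian-integer construction is sound and goes beyond what the paper needs. It can be tightened in two places.

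\begin{enumerate}
\item The cases $q=2$ and $q\equiv 3 \bmod 4$ need neither inertness nor the exponent of $1+i$. If $q\mid x$ and $q\mid y$, then already $q^2\mid x^2+y^2=n$ in $\mathbb{Z}$. This immediately contradicts $\delta\le 1$, respectively the absence of primes $\equiv 3 \bmod 4$ in $n$.
\item The same observation gives $q\mid n$ in the case $q\equiv 1\bmod 4$, hence $q=p_{i'}$; you should state this rather than leave it implicit. This is the only case that genuinely needs unique factorization in $\mathbb{Z}[i]$, since $p_{i'}^2$ may well divide $n$. For the unique-factorization step, you need $\bar\pi_{i'}$ to be non-associate not only to $\pi_{i'}$ but also to the other $\pi_j$ and to $1+i$. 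That follows at once by comparing norms.
\end{enumerate}
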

This result immediately implies
\begin{corollary}\label{cor:1}
Define $P_{S_1}=\{p\in\mathcal{P} \mid p=x^2+y^2+1,\,x,y\in \mathbb{Z}, \; \gcd(x,y)=1\}$. All primes $p \in P_{S_1}$ satisfy $(p-1)_{S_1}\geq (p-1)/2$.
\end{corollary}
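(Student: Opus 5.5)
The plan is to apply Proposition~\ref{prop:1mod4} directly to the integer $n = p-1$. Take $p \in P_{S_1}$. By definition $p = x^2+y^2+1$ for some coprime integers $x,y$, so $n = p-1 = x^2+y^2$ with $\gcd(x,y)=1$.

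First observe that $n \geq 1$. Coprimality rules out $x=y=0$, since $\gcd(0,0)=0\neq 1$. So $n$ is a natural number and the proposition applies.

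The ``only if'' direction of Proposition~\ref{prop:1mod4} then gives the prime decomposition
\[
p-1 = 2^\delta \prod_{i=1}^k p_i^{\alpha_i}
\]
with $\delta\in\{0,1\}$ and every $p_i \in S_1$. Since $2\notin S_1$, the product $\prod_i p_i^{\alpha_i}$ is exactly the $S_1$-part of $p-1$. Hence
\[
(p-1)_{S_1} = \frac{p-1}{2^\delta} \geq \frac{p-1}{2}.
\]

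I expect no real obstacle here. The only care needed is the degenerate case $n=0$, which coprimality excludes, and the remark that the factor $2^\delta$ is the only part of $p-1$ lying outside $S_1$.
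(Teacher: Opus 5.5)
Your proposal is correct and matches the paper's approach: the paper simply observes that the corollary follows immediately from Proposition~\ref{prop:1mod4} applied to $n=p-1$. Your remarks excluding $x=y=0$ and noting $2\notin S_1$ just make explicit the steps the paper leaves implicit.
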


It is known that Proposition \ref{prop:1mod4} can be extended to the quadratic form $f(x,y)=x^2+ny^2$ for some $n\in \mathbb{N}$ in certain cases \cite{Cox1997}. If $m=x^2+ny^2$  for $x,y \in \mathbb{Z}$ with $\gcd(x,y)=1$, then $\gcd(m,y)=1$. Therefore, $y$ is a unit modulo $m$ and $-n$ is a quadratic residue modulo $m$ and modulo every prime divisor of $m$. 
The only possible prime divisors of $m$ that do not divide $n$ are the primes $p$ for which the corresponding Legendre symbol $\begin{pmatrix}
    \frac{-n}{p}
\end{pmatrix}$ is equal to $1$. This condition forces the possible prime divisors to lie in certain arithmetic progressions. We will present the results for $n=2$ and $n=3$.

\begin{proposition}
    Let $n \in \mathbb{N}$ and $S_2=\{p\in \mathcal{P} \mid p \equiv 1 \bmod 8 \text{ or } p \equiv 3 \bmod 8\}$. If $n=x^2+2y^2$ for coprime $x,y \in \mathbb{Z}$, then $n=2^\delta \prod_{i=1}^k p_i^{\alpha_i}$, where $\delta \in \{0,1\}$ and $p_1,\dots,p_k\in S_2$.
\end{proposition}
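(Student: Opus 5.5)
We follow the strategy sketched just before the statement. Suppose $n=x^2+2y^2$ with $\gcd(x,y)=1$. First, $\gcd(n,y)=1$: a prime $q$ dividing both $n$ and $y$ divides $x^2=n-2y^2$, hence divides $x$, contradicting coprimality. Consequently, for every odd prime $p\mid n$ we may invert $y$ modulo $p$. Then $x^2\equiv -2y^2 \pmod p$ gives $(xy^{-1})^2\equiv -2 \pmod p$. Moreover $p\nmid x$, since otherwise $p\mid 2y^2$ and hence $p\mid y$. So $-2$ is a nonzero quadratic residue modulo $p$, that is, $\left(\frac{-2}{p}\right)=1$.

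Next we identify these primes. By multiplicativity, $\left(\frac{-2}{p}\right)=\left(\frac{-1}{p}\right)\left(\frac{2}{p}\right)$. The first supplementary law gives $\left(\frac{-1}{p}\right)=1$ iff $p\equiv 1 \bmod 4$. The second supplementary law gives $\left(\frac{2}{p}\right)=1$ iff $p\equiv \pm1 \bmod 8$. Checking the residues $1,3,5,7 \bmod 8$, the product equals $1$ exactly for $p\equiv 1,3 \bmod 8$. Hence every odd prime divisor of $n$ lies in $S_2$.

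It remains to bound the power of $2$. If $4\mid n$, then $x^2\equiv -2y^2 \pmod 4$ forces $x$ even. Write $x=2x'$; then $4x'^2+2y^2\equiv 0 \pmod 4$ gives $y^2$ even, so $y$ is even, contradicting $\gcd(x,y)=1$. Thus $\delta\in\{0,1\}$, and $n=2^\delta\prod p_i^{\alpha_i}$ with all $p_i\in S_2$.

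No step is a real obstacle. The only care needed is to exclude $p\mid x$, so that $-2$ is a genuine nonzero residue, and to quote the supplementary laws of quadratic reciprocity correctly.
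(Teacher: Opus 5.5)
Your proof is correct and follows essentially the same route as the paper: for odd prime divisors you use $\left(\frac{-2}{p}\right)=1$, evaluated via the two supplementary laws, and a parity argument shows $4\nmid n$. You are slightly more explicit than the paper in justifying why $y$ is invertible modulo $p$ and why $p\nmid x$.
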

\begin{proof}
    First, we observe that for any even integer $n=x^2+2y^2$, where $\gcd(x,y)=1$, $x$ must be an even integer. Therefore, $n$ has a residue of $2$ modulo $4$ and is only divisible by $2$, not $4$. For odd primes the result is given by an old theorem of Fermat \cite[p. 7]{Cox1997}:
    Odd prime divisors $p$ of $n$ must satisfy the condition \begin{align*}
        1=\begin{pmatrix}
    \frac{-2}{p}
\end{pmatrix}=\begin{pmatrix}
    \frac{-1}{p}
\end{pmatrix}\begin{pmatrix}
    \frac{2}{p}
\end{pmatrix}=(-1)^{\frac{p-1}{2}}(-1)^{\frac{p^2-1}{8}}. 
    \end{align*} 
    This implies, as one readily verifies, that all odd prime divisors of $n$ lie in~$S_2$.
\end{proof}
\begin{corollary}\label{cor:2}
Define $P_{S_2}=\{p\in\mathcal{P} \mid p=x^2+2y^2+1,\,x,y\in \mathbb{Z}, \; \gcd(x,y)=1\}$.
All primes $p \in P_{S_2}$ satisfy $(p-1)_{S_2}\geq (p-1)/2$.
\end{corollary}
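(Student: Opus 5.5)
The plan is to apply the preceding proposition to $m = p-1$, in the same way Corollary \ref{cor:1} follows from Proposition \ref{prop:1mod4}. Let $p \in P_{S_2}$. Then $p-1 = x^2+2y^2$ for some coprime integers $x,y$. Since $p \geq 2$, we have $m = p-1 \ge 1$, so $m$ is a natural number, which is what the proposition requires. The proposition then gives a factorisation
\[
p-1 = 2^\delta \prod_{i=1}^k p_i^{\alpha_i}, \qquad \delta\in\{0,1\},\ p_i \in S_2 .
\]

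Next I identify $(p-1)_{S_2}$. Every prime $p_i$ in the product lies in $S_2$, and $2 \notin S_2$ because $2$ is not congruent to $1$ or $3$ modulo $8$. Hence the largest divisor of $p-1$ whose prime factors all lie in $S_2$ is exactly
\[
(p-1)_{S_2} = \prod_{i=1}^k p_i^{\alpha_i} = \frac{p-1}{2^\delta}.
\]
Since $\delta \le 1$, this gives $(p-1)_{S_2} \ge (p-1)/2$.

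There is no real obstacle here. The only points to check are that $m=p-1$ is positive, which excludes the degenerate case $x=y=0$ (that case is incompatible with $\gcd(x,y)=1$ anyway), and that the prime $2$ is not counted in $S_2$. The parity argument already given in the proof of the proposition guarantees $\delta\le 1$, which is precisely the source of the factor $1/2$.
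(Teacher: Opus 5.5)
Your proposal is correct and is essentially the paper's argument. The paper leaves this corollary as an immediate consequence of the preceding proposition, and you apply that proposition to $p-1$, use $2\notin S_2$ to get $(p-1)_{S_2}=(p-1)/2^{\delta}$, and conclude from $\delta\le 1$.
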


\begin{proposition}
    Let $n \in \mathbb{N}$ and $S_3=\{p\in \mathcal{P} \mid p \equiv 1 \bmod 3\}$. If $n=x^2+3y^2$ for coprime $x,y \in \mathbb{Z}$, then $n=2^{\delta_1} 3^{\delta_2} \prod_{i=1}^k p_i^{\alpha_i}$, where $\delta_1 \in \{0,1,2\}$,  $\delta_2 \in \{0,1\}$, $p_1,\dots,p_k\in S_3$.
\end{proposition}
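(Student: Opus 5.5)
We follow the pattern of the proof for $S_2$: we handle the primes $2$ and $3$ by congruence arguments, and the remaining odd primes by the quadratic residue observation preceding these propositions. Throughout, let $n = x^2+3y^2$ with $\gcd(x,y)=1$. As noted before the previous proposition, $\gcd(n,y)=1$, so $y$ is invertible modulo every prime divisor $p$ of $n$. Reducing $x^2 \equiv -3y^2 \pmod p$ therefore shows that $-3$ is a square modulo $p$.

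\emph{The prime $3$.} If $3 \mid n$, then $3 \mid x^2$, so $3 \mid x$ and $3 \nmid y$ by coprimality. Writing $x=3x'$ gives $n = 9x'^2 + 3y^2 = 3(3x'^2+y^2)$. The second factor is $\equiv y^2 \not\equiv 0 \pmod 3$, so $9 \nmid n$ and $\delta_2 \in \{0,1\}$.

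\emph{The prime $2$.} Since $x,y$ are coprime, they are not both even. If exactly one of them is even, then $n$ is odd. If both are odd, then $x^2 \equiv y^2 \equiv 1 \pmod 8$, so $n \equiv 1+3 = 4 \pmod 8$. Hence $4 \mid n$ but $8 \nmid n$, which gives $\delta_1 \in \{0,2\}\subseteq\{0,1,2\}$.

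\emph{Odd primes $p \neq 3$.} For such a prime divisor $p$ of $n$, the Legendre symbol satisfies $\left(\frac{-3}{p}\right)=1$. By quadratic reciprocity,
\[
\left(\frac{-3}{p}\right)=\left(\frac{-1}{p}\right)\left(\frac{3}{p}\right)=(-1)^{\frac{p-1}{2}}(-1)^{\frac{p-1}{2}}\left(\frac{p}{3}\right)=\left(\frac{p}{3}\right).
\]
This equals $1$ exactly when $p\equiv 1 \pmod 3$, i.e.\ when $p\in S_3$. Combining the three cases yields the claimed factorization.

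The only step needing care is the prime $2$. Unlike for $x^2+2y^2$, a factor $4$ can genuinely occur, e.g.\ $4=1^2+3\cdot 1^2$. Getting the exponent right therefore relies on the mod $8$ computation rather than a mod $4$ one.
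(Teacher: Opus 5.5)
Your proof is correct and follows essentially the same route as the paper: a mod-$8$ argument for the prime $2$, a divisibility argument for the prime $3$, and the reciprocity computation $\left(\frac{-3}{p}\right)=\left(\frac{p}{3}\right)$ for the remaining odd primes. Your version is slightly tighter. Starting from $3\mid n$ covers the case $3\nmid xy$, which the paper's case split skips, and it explicitly shows $9\nmid n$. Your mod-$8$ computation also gives the sharper conclusion $\delta_1\in\{0,2\}$.
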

\begin{proof}
  First, we observe that for any even integer $n=x^2+3y^2$, where $\gcd(x,y)=1$, both $x$ and $y$ are odd integers.
    Since all odd squares have a residue of $1$ modulo $8$, $n$ cannot be divisible by $8$.
    Regarding the divisor $3^{\delta_2}$, there are two cases.
    First, if $y$ is divisible by $3$, then $x$ is not and $\delta_2=0$. Second, if $x$ is divisible by $3$, then $y$ is not, and thus, $\delta_2=1$. For odd primes the result is again given by an old theorem of Fermat \cite[p. 7]{Cox1997}:
      According to the law of quadratic reciprocity, odd prime divisors $p\neq 3$ must satisfy the following condition: \begin{align*}
        1=\begin{pmatrix}
    \frac{-3}{p}
\end{pmatrix}=\begin{pmatrix}
    \frac{-1}{p}
\end{pmatrix}\begin{pmatrix}
    \frac{3}{p}
\end{pmatrix}=(-1)^{\frac{p-1}{2}} \begin{pmatrix}
    \frac{3}{p}
\end{pmatrix}=\begin{pmatrix}
    \frac{p}{3}
\end{pmatrix}. 
    \end{align*} 
   Since $1$ is the only quadratic residue of a prime $p\neq3$ modulo $3$, the only possible odd prime divisors of $n$ are the primes in $S_3$. 
\end{proof}
As before we deduce
\begin{corollary}\label{cor:3}
Define $P_{S_3}=\{p\in\mathcal{P} \mid p=x^2+3y^2+1,\,x,y\in \mathbb{Z}, \; \gcd(x,y)=1\}$. 
All primes $p \in P_{S_3}$ satisfy $(p-1)_{S_3}\geq (p-1)/12$. 
\end{corollary}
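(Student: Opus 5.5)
The corollary follows directly from the preceding proposition for $x^2+3y^2$. Let $p \in P_{S_3}$, so $p = x^2+3y^2+1$ with $\gcd(x,y)=1$. Then $m := p-1 = x^2+3y^2$ is a positive integer represented by the form $x^2+3y^2$ with coprime arguments. If $m=0$, then $p=1$, which is impossible, so $m \geq 1$ and the proposition applies to $m$.

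The proposition gives the factorization
\[
m = 2^{\delta_1}3^{\delta_2}\prod_{i=1}^k p_i^{\alpha_i}
\]
with $\delta_1\in\{0,1,2\}$, $\delta_2\in\{0,1\}$ and every $p_i \in S_3$. The product $\prod_i p_i^{\alpha_i}$ is a divisor of $m$ all of whose prime factors lie in $S_3$. The primes $2$ and $3$ are not congruent to $1 \bmod 3$, so they do not belong to $S_3$. Hence $(m)_S$ for $S=S_3$ is exactly $\prod_i p_i^{\alpha_i} = m/(2^{\delta_1}3^{\delta_2})$. Since $2^{\delta_1}3^{\delta_2}$ divides $4\cdot 3=12$, we obtain
\[
(p-1)_{S_3} \geq \frac{p-1}{12}.
\]

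The only point requiring any care is the identification of $(m)_{S_3}$ with the cofactor of $2^{\delta_1}3^{\delta_2}$. This identification needs every prime of $m$ outside $\{2,3\}$ to lie in $S_3$, which is precisely the content of the proposition. There is no real obstacle. The constant $12$ is crude, and $6$ would in fact suffice: if $m$ is even then $x,y$ are both odd, so $x^2+3y^2\equiv 4 \bmod 8$ gives $\delta_1=2$, but then $3\nmid m$ is not forced in general, so $12$ is the safe bound as stated.
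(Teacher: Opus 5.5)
Your argument is correct and is the same as the paper's, which simply deduces the corollary from the preceding proposition: $(p-1)_{S_3}$ is the cofactor of $2^{\delta_1}3^{\delta_2}$, and $2^{\delta_1}3^{\delta_2}$ divides $12$. Drop your closing remark, however: the constant $12$ is sharp, since $p=13=3^2+3\cdot 1^2+1$ gives $(12)_{S_3}=1=(p-1)/12$, so $6$ would not suffice (your mod $8$ observation only shows $\delta_1\neq 1$, i.e.\ for odd $p$ the $2$-part of $p-1$ is exactly $4$).
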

Next, we will need some results from the theory of quadratic forms.
\begin{definition}[see \cite{Cox1997}]
    Two binary quadratic forms, $f(x,y)$ and $g(x,y$), are said to be \emph{(properly) equivalent} if there are $a,b,c,d \in \mathbb{Z}$ such that $ad-bc=1$ and
    \begin{align*}
        f(x,y)=g(ax+by,cx+dy) \text{ for all } x,y \in \mathbb{Z}. 
    \end{align*}
    The \emph{genus of a quadratic form} $f(x,y)$ is the set of all binary quadratic forms that represent the same numbers in $(\mathbb{Z}/|\Delta_f|\mathbb{Z})^*$. It is denoted by $R_f$.
\end{definition}

The equivalence of two binary quadratic forms is an equivalence relation \cite{Cox1997}. All forms in an equivalence class represent the same set of natural numbers. However, this is not true for the genus of a quadratic form. Two forms in the same genus are not necessarily equivalent.

\begin{example}
Consider $f_n(x,y)=x^2+ny^2$ for $n \in \mathbb{N}$.
    \begin{enumerate}
        \item For $n=1,2,3$ the genus of the form $f_n$ contains only one equivalence class. In fact, there is only one equivalence class of primitive positive definite forms of discriminant $-4,-8,-12$ (see \cite[Ch. I, Theorem 2.18]{Cox1997}).
        \item For $n=14$ the forms $x^2+14y^2$ and $2x^2+7y^2$ lie in the same genus. However, $71=2\cdot2^2+7\cdot3^2$ is not representable by $x^2+14y^2$, meaning the two quadratic forms are not equivalent \cite[Ch. I \S2.C]{Cox1997}.
    \end{enumerate}
\end{example}

\section{Proof of Theorem \ref{thm:main}}
For a binary quadratic form $f$, let $P_f$ be the set of primes of the form \[p=g(x,y)+1,\]
where $g\in R_f$ and coprime integers $x,y \in \mathbb{Z}$.
For $N \in \mathbb{R}_+$, let $\pi(N;f)$ denote the number of primes $p \in P_f$ with $p \leq N$.
We note that for the quadratic forms $f_1=x^2+y^2$, $f_2 = x^2+2y^2$, $f_3=x^2+3y^2$, the genus consists of exactly one equivalence class, hence that set $P_{S_n}$ defined above is exactly $P_{f_n}$ for $n \in \{1,2,3\}$.

Our main tool is the following theorem of Iwaniec:
\begin{theorem}[Iwaniec 1972; {\cite[Theorem 1]{Iwaniec1972}}]\label{thm:Iwaniec}
    Let $f(x,y)=ax^2+bxy+cy^2$ be a binary quadratic form with the following properties: $a>0$, $\gcd(a,b,c)=1$ and $\Delta_f$ is not a perfect square. Then there is a positive real constant, $c_1$, such that
    \begin{align*}
        c_1\frac{N}{\ln(N)^{3/2}}(1+o(1))+O\bigg(\frac{N}{\ln(N)^{5}}\bigg) 
        &< \pi(N;f) .
    \end{align*}
\end{theorem}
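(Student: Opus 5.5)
The statement is Iwaniec's theorem, which we only quote. Still, here is how I would reconstruct its proof: as a lower-bound \emph{half-dimensional sieve} applied to the shifted primes $p-1$, with Bombieri--Vinogradov supplying the level of distribution.

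\emph{Step 1: reduce representability by the genus to local conditions.} By the classical genus theory in \cite{Cox1997}, a positive integer $n$ coprime to $\Delta_f$ is primitively represented by some form of the genus $R_f$ if and only if two things hold. Every prime $q \mid n$ satisfies $\left(\frac{\Delta_f}{q}\right)=1$, and $n$ lies in a prescribed set of residue classes modulo $|\Delta_f|$, namely the values of $f$ in $(\mathbb{Z}/|\Delta_f|\mathbb{Z})^*$. For primes dividing $\Delta_f$ one allows only boundedly many powers, as in the propositions above. Hence $\pi(N;f)$ is at least the number of primes $p\le N$ satisfying:
\begin{itemize}
\item $p$ lies in a fixed admissible class modulo $|\Delta_f|$ (times a bounded power of the primes dividing $2\Delta_f$);
\item $p-1$ has no prime factor $q$ with $\left(\frac{\Delta_f}{q}\right)=-1$.
\end{itemize}
Since $\Delta_f$ is not a square, the quadratic character $\chi=\left(\frac{\Delta_f}{\cdot}\right)$ is non-principal. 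By Dirichlet's theorem the excluded primes $\mathcal{Q}$ therefore have density $1/2$, so this is a sieve problem of dimension $\kappa=1/2$.

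\emph{Step 2: run the lower-bound sieve.} Take $\mathcal{A}=\{p-1 : p\le N,\ p \text{ in the admissible class}\}$ and sift by $\mathcal{Q}$ up to $z$. The local densities are $|\mathcal{A}_d|\approx \pi(N)/\varphi(d)$, so the expected main term is
\[
\pi(N)\prod_{q<z,\,q\in\mathcal{Q}}\Bigl(1-\tfrac{1}{q-1}\Bigr)\asymp \frac{N}{\ln N}\cdot\frac{1}{(\ln z)^{1/2}}.
\]
The remainders $\sum_{d<D}|r_d|$ are controlled by Bombieri--Vinogradov with $D=N^{1/2}(\ln N)^{-B}$; this is where the error term $O(N/\ln(N)^5)$ comes from. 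For $\kappa=1/2$ the sieving limit is $\beta_\kappa=1$, and Iwaniec's half-dimensional sieve (equivalently, the Ankeny--Onishi lower bound with $F,f$ from the semi-linear sieve) gives a positive lower function $f(s)$ for $s=\ln D/\ln z>1$. So one may take $z=D^{1-\epsilon}\approx N^{1/2-\epsilon}$. This yields $\gg N/(\ln N)^{3/2}$ primes $p$ such that $p-1$ has no prime factor from $\mathcal{Q}$ below $z$.

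\emph{Step 3: the main obstacle.} We must remove the primes $p$ for which $p-1$ has a prime factor $q\in\mathcal{Q}$ with $q\ge z\approx N^{1/2-\epsilon}$. Since $p-1\le N$, there are at most two such factors. The trivial Brun--Titchmarsh bound $\sum_{z\le q}\pi(N;q,1)$ loses a factor $\ln N$ relative to the sifted count of Step 2, which is unacceptable. I would therefore first use a switching (Chen-type) argument. Write $p-1=qm$ with $m\le N^{1/2+\epsilon}$, where $m$ itself has no small factor from $\mathcal{Q}$, and count these configurations by an upper-bound half-dimensional sieve in the roles swapped. This is what makes the error only $O(\epsilon)$ times the main term. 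The delicate point is that $f(s)$ is barely positive near $s=1$, so I expect to need Iwaniec's sharper half-dimensional sieve constants rather than generic ones. Combining Steps 1--3 gives the asserted inequality with some $c_1>0$.
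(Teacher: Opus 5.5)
The paper does not prove this statement; it only cites Iwaniec. Your Steps 1--2 set up the right problem: a sieve of dimension $1/2$ on $\{p-1\}$, with Bombieri--Vinogradov level $D\approx N^{1/2}$ and $z=D^{1/s}$ for $s$ slightly above $1$. Step 3, however, has a genuine gap.

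\textbf{Why Step 3 fails.} The configurations $p-1=mq$ with $q\in\mathcal{Q}$ and $q\ge z$ cannot be shown to be $O(\epsilon)$ of the main term by switching or by any other upper-bound sieve. This is the parity obstruction. Replace your admissible class by one in which $\chi$ of the $\Delta_f$-coprime part of $p-1$ equals $-1$ (e.g.\ $p\equiv 7 \bmod 8$ for $x^2+y^2$). Every input you use (the densities $1/\varphi(d)$, Bombieri--Vinogradov, the switched counts) is unchanged. So your lower bound $(f(s)+o(1))V(z)|\mathcal{A}|$, and your upper bounds $U_1,U_2$ for one and two large $\mathcal{Q}$-primes, remain valid in that class. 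There, though, no $p-1$ is free of $\mathcal{Q}$-primes, so the sifted set consists entirely of these configurations. This forces $U_1+U_2\ge (f(s)+o(1))V(z)|\mathcal{A}|$, so your final lower bound is never positive. Heuristically, in such a class the one-large-prime configurations number $\asymp N/(\ln N)^{3/2}$ with a constant independent of $s$, while $f(s)\asymp (s-1)^{1/2}$ as $s\to1^+$. Sharper sieve constants cannot fix this.

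\textbf{The missing idea.} You need a non-sieve use of the class you already imposed in Step 1. The character $\chi$ is completely multiplicative and equals $+1$ at every prime not in $\mathcal{Q}$ and not dividing $\Delta_f$. The genus condition gives $\chi(n')=+1$ for the $\Delta_f$-coprime part $n'$ of $p-1$. Hence $n'$ has an even number of prime factors from $\mathcal{Q}$, counted with multiplicity. For $x^2+y^2$ this is the classical choice $p\equiv 3\bmod 8$, so that $(p-1)/2\equiv 1 \bmod 4$.

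Since $z>N^{1/3}$, a sifted $p-1$ then has either no $\mathcal{Q}$-prime or exactly two, $q_1q_2$ with $z\le q_1\le q_2$. The one-prime case, which sank your estimate, is simply empty. In the two-prime case the $\mathcal{Q}$-free cofactor satisfies $m\le N/z^2=N^{1-1/s+o(1)}$. Apply an upper-bound sieve for $q_2$ and $mq_1q_2+1$ both prime, then sum over $q_1$ (a factor $\ll \ln s$) and over $\mathcal{Q}$-free $m$ (a factor $\asymp((s-1)\ln N)^{1/2}$). This gives $\ll (s-1)^{3/2}N/(\ln N)^{3/2}$, which is negligible against $f(s)V(z)|\mathcal{A}|\asymp (s-1)^{1/2}N/(\ln N)^{3/2}$ once $s$ is close to $1$. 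This is where the sieving limit $1$ matters; no delicate comparison of constants is needed.

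\textbf{A separate problem with Step 1.} Step 1 tacitly assumes an admissible class exists. If $\Delta_f\equiv 5 \bmod 8$, every form of discriminant $\Delta_f$ has $a,b,c$ odd. Its values at coprime $(x,y)$ are then odd, so $P_f\subseteq\{2\}$; take for instance $f=x^2+xy+y^2$. Thus the statement as transcribed, with coprime $x,y$ and the genus taken among forms of discriminant $\Delta_f$, is false in that case and needs an extra hypothesis. This does not affect the three forms used in the paper.
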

In fact, {\cite[Theorem 1]{Iwaniec1972} is more general and is concerned with primes of the form $Bg(x,y)+A$; here we only need $A=1$ and $B=1$. The constant $c_1$ is explicitly described in \cite{Iwaniec1972} by $c_1 = \theta\Psi_{A,B,D}\Omega_{A,B,D}$ and one can verify by unwinding the definitions that all terms in this product are positive.

For us the following consequence is important, which is weaker than Iwaniec's result.

\begin{corollary}\label{cor:divergent}
    Under the conditions of Theorem \ref{thm:Iwaniec}, the sum  $\sum_{ p \in P_f} \frac{1}{p^{1-\epsilon}}$ diverges for all $\epsilon>0$.
\end{corollary}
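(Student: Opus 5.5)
The plan is to derive the divergence from Theorem \ref{thm:Iwaniec} by partial (Abel) summation. Fix $\epsilon>0$; we may assume $\epsilon<1$, since for $\epsilon\ge 1$ each term is at least $1$ and the sum over the infinite set $P_f$ diverges trivially. First we extract a clean lower bound from Iwaniec's theorem. Since $N/\ln(N)^5 = o(N/\ln(N)^{3/2})$ and $c_1>0$, there are $N_0$ and a constant $c>0$ (say $c=c_1/2$) such that
\[
\pi(N;f) \geq c\,\frac{N}{\ln(N)^{3/2}} \quad\text{for all } N\geq N_0 .
\]
In particular $P_f$ is infinite.

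Next we bound the sum over dyadic blocks, which avoids a formal Stieltjes integration. For $k$ large, the primes $p\in P_f$ with $2^k<p\le 2^{k+1}$ number at least $\pi(2^{k+1};f)-\pi(2^k;f)$. This difference is not obviously large from a lower bound alone, so instead we compare $\pi(M;f)$ with $\pi(M/K;f)$ for a big constant $K$, or more simply use a cruder argument. Every $p\le N$ satisfies $p^{-(1-\epsilon)}\ge N^{-(1-\epsilon)}$, hence
\[
\sum_{p\in P_f,\ p\le N} \frac{1}{p^{1-\epsilon}} \;\ge\; \frac{\pi(N;f)}{N^{1-\epsilon}} \;\ge\; c\,\frac{N^{\epsilon}}{\ln(N)^{3/2}} ,
\]
and the right-hand side tends to $\infty$ as $N\to\infty$ because $N^\epsilon$ dominates any power of $\ln N$. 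Since the partial sums are unbounded and the terms are positive, the series diverges.

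There is no serious obstacle; the only point needing care is the first step, absorbing the $(1+o(1))$ factor and the $O(N/\ln(N)^5)$ error term into a genuine lower bound with a positive constant. This relies on the positivity of $c_1$ noted after Theorem \ref{thm:Iwaniec}.
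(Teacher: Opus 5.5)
Your proposal is correct and follows essentially the same route as the paper: absorb the $(1+o(1))$ factor and the $O(N/\ln(N)^5)$ term into a bound $\pi(N;f)\gg N/\ln(N)^{3/2}$ using $c_1>0$, then bound the partial sum up to $N$ from below by $\pi(N;f)\,N^{\epsilon-1}$, which tends to infinity. The dyadic-block remark in your second paragraph is abandoned and plays no role, so you can simply delete it.
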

\begin{proof}
Since the assertion is true otherwise, we can restrict the analysis to the case of $\epsilon<1$.
Fix some $\delta \in (0,1)$. We can replace the $(1+o(1))$ term in Iwaniec's theorem by $\delta$ for all large $N$.
We deduce from Theorem \ref{thm:Iwaniec} that there exist $N_0 \in \mathbb{R}$ so that for all $N\geq N_0$ 
    	\begin{align*}
		 \sum_{ p \in P_f,p \leq N} \frac{1}{p^{1-\epsilon}}&\geq \frac{\pi(N;f)}{N^{1-\epsilon}} 
		 \\&\geq \delta c_1\frac{N^{\epsilon}}{\ln(N)^{3/2}} + O\bigg(\frac{N^\epsilon}{\ln(N)^{5}}\bigg); 
	\end{align*}
    in the first inequality, we simply bound each term in the sum from below by $N^{\epsilon-1}$. Taking the limit as $N$ approaches infinity yields the result.
\end{proof}

We can finally prove our main result by specializing $P_f$ to $P_{S_1}, P_{S_2}$ and $P_{S_3}$.

\begin{proof}[Proof of Theorem \ref{thm:main}]
Let $S$ be any set of primes.
The free procyclic group $\widehat{\bbZ} = \prod_{p \in \mathcal{P}} \bbZ_p$ has Weil abscissa $2$; see \cite[Theorem C (i)]{corob2024weil}. Hence the inequality 
\[\alpha(H_S) \leq 2\] follows immediately using that $H_S$ is a quotient of $\widehat{\bbZ}$. 

\medskip

It remains to establish the lower bound. Let $n \in \{1,2,3\}$ and put $S = S_n$. We exhibit divergent minorants for 
\[\ln\bigl(\zeta_{H_S}^W(2-\epsilon)\bigr) = \sum_{p \in \mathcal{P}}\sum_{j=1}^\infty |\Hom(H_S,\bbF_{p^j})| \frac{p^{-j(2-\epsilon)}}{j}\]
for all $\epsilon>0$. Recall that $|\Hom(H_S,\bbF_{p^j})| = (p^j-1)_S$.
 Ignoring the summands for $j>1$, we see that a minorant for $\ln\bigl(\zeta_{H_S}^W(2-\epsilon)\bigr)$ is $\sum_{p \in \mathcal{P}}(p-1)_S \cdot p^{-2+\epsilon}$.
For each of the three cases,  that set $P_S\subset\mathcal{P}$ is such that  $(p-1)_S\geq (p-1)/c$ for a constant $c$ for all $p \in P_S$ (see Corollaries \ref{cor:1}, \ref{cor:2}, \ref{cor:3}). Then we have asymptotically
\begin{align*}
    \sum_{p \in \mathcal{P}}(p-1)_S \cdot p^{-2+\epsilon}\geq \sum_{p \in P_S} \frac{p-1}{cp^{2-\epsilon}} \gg \sum_{p \in P_S} \frac{1}{p^{1-\epsilon}},
\end{align*}
and the right sum diverges by Corollary \ref{cor:divergent}. In summary, we have $\alpha(H_{S_i})\geq 2$ for $i=1,2,3$. Thus, Theorem~\ref{thm:main} follows.
\end{proof}

\bibliography{MA}
\bibliographystyle{plain}
\end{document}